\theoremstyle{plain}
\newtheorem{thm}{Theorem}
\newtheorem{lemma}{Lemma}[section]
\newtheorem{prop}[lemma]{Proposition}
\newtheorem{proposition}[lemma]{Proposition}
\newtheorem{coro}[lemma]{Corollary}
\newtheorem{corollary}[lemma]{Corollary}
\newtheorem{theorem}[lemma]{Theorem}
\newtheorem*{prop*}{Proposition}
\newtheorem*{thm-rigidityE}{Theorem~\ref{th:rigidityE}}
\theoremstyle{definition}
\newtheorem{defn}[lemma]{Definition}
\newtheorem{remark}[lemma]{Remark}
\newtheorem{ex}[lemma]{Example}
\newtheorem*{ack}{Acknowledgement}
\theoremstyle{remark}
\newtheorem*{pf}{Proof}
\numberwithin{equation}{section}
\newenvironment{enumeratei}{\begin{enumerate}[\upshape (i)]}{\end{enumerate}}
\newcommand{\Ric}{\textup{Ric}}
\newcommand{\inte}{\textup{int}}
\newcommand{\p}{\partial}
\begin{document}
\title[Geometric inequalities and rigidity theorems]{Geometric inequalities and rigidity theorems on equatorial spheres}
\author{Lan-Hsuan Huang}
\address{Department of Mathematics\\
 University of Connecticut\\
 Storrs, CT 06269, USA\\}
\email{lan-hsuan.huang@uconn.edu}

\author{Damin Wu}
\address{Department of Mathematics\\
 University of Connecticut\\
 Storrs, CT 06269, USA\\}

\email{damin.wu@uconn.edu}

\thanks{The authors were partially supported by National Science Foundation through DMS-1308837. The first named author was also partially supported by DMS-1452477.}
\begin{abstract}
We prove rigidity for hypersurfaces with boundary in the unit sphere $\mathbb{S}^{n+1}$ with scalar curvature $R\ge n(n-1)$. Under appropriate boundary conditions, the hypersurfaces are shown to be part of the equatorial spheres. The lower bound $n(n-1)$ is critical in the sense that the hypersurface may contain geodesic points and some natural differential operators  are fully degenerate at geodesic points.  We overcome the difficulty by  studying the geometry of level sets of a height function, via new geometric inequalities. Some rigidity results of hyperplanes and generalized cylinders are also obtained for hypersurfaces with boundary and with nonnegative scalar curvature in Euclidean space.
\end{abstract}
\maketitle

\section{Introduction}
%The geometry of hypersurfaces in the unit sphere $\mathbb{S}^{n+1}$ ($n \ge 2$) has many interesting phenomena, comparing with other space-forms. For example, there are numerous closed embedded minimal surfaces of prescribed genus in $\mathbb{S}^3$ (see Lawson~\cite{L} for example), in contrast to the fact that no closed compact minimal surfaces lie in Euclidean space.

Hypersurfaces in $\mathbb{S}^{n+1}$ of either \emph{constant scalar curvature} or \emph{constant mean curvature} have been extensively studied in the literature.  The pioneering work of S.~Y.~Cheng and S.~T.~Yau~\cite{Cheng-Yau:1977} classifies hypersurfaces that have constant scalar curvature and nonnegative sectional curvature. They introduced a self-adjoint operator which has been used by other people to study hypersurfaces of constant scalar curvature, under various conditions (see, for example, \cite{Alencar-doCarmo-Colares:1993, Li:1996}). 

In this paper, we consider hypersurfaces $M$ with nonempty boundary in $\mathbb{S}^{n+1}$ for $n\ge 2$ with  scalar curvature $R\ge n(n-1)$. We prove rigidity of $M$ under a suitable boundary condition. The lower bound $n(n-1)$ is a critical value because $R\ge n(n-1)$ implies that $H^2 \ge |A|^2$ by Gauss equation, so it is possible for the mean curvature to change signs at the geodesic points.  From an analytic aspect, several geometric operators, such as Cheng--Yau's operator, the linearized scalar curvature operator, and the scalar curvature flow, are no longer globally elliptic or parabolic and are fully degenerate at the geodesic points. Thus the theory of maximum principle is not applicable to those operators. (We remark that if one assumes $M$ is contained in the hemisphere and has constant scalar curvature, then the desired strict ellipticity automatically holds; hence, $M$ is a sphere by applying the Alexandrov reflection principle (see N.~Korevaar~\cite{Korevaar:1988}).)
In this article, we develop a different approach using new geometric inequalities and the level sets of hypersurfaces, motivated by our recent work for \emph{complete} hypersurfaces in Euclidean space~\cite{Huang-Wu:2013}.

Another motivation of this paper comes from Min--Oo's problem. Let $M$ be an $n$-dimensional compact Riemannian manifold of scalar curvature $R \ge n(n-1)$ with boundary $\partial M$. Suppose $\partial M$ is isometric to the unit sphere $\mathbb{S}^{n-1}$ and is totally geodesic in $M$. The problem asks whether $M$ is isometric to the hemisphere $\mathbb{S}^n_+$.  Recently the counter-examples for the general statement are provided by S.~Brendle, F.~Marques, and A.~Neves~\cite{Brendle-Marques-Neves:2011}. Nevertheless, there have been many interesting positive results in various settings. F.~Hang and X.~Wang \cite{Hang-Wang:2006, Hang-Wang:2009} proved the conjecture under the condition that, either $g$ is conformal to the standard sphere metric, or the Ricci curvature satisfies $\mbox{Ric} \ge (n-1)g$. By assuming positive Ricci curvature on $M$ and an isoperimetric condition on the boundary, M.~Eichmair  proved the conjecture in three dimensions~\cite{Eichmair:2009}.  We refer to a survey by Brendle~\cite{Brendle:2012} and the references therein.

In an early work~\cite{Huang-Wu:2010}, we confirmed the conjecture if $M$ is a hypersurface with boundary in either Euclidean space or the hyperbolic space, by applying the strong maximum principle to the mean curvature operator. It is a natural continuation to study the case when $M$ is a hypersurface in $\mathbb{S}^{n+1}$. However, the method in \cite{Huang-Wu:2010} does not apply to the spherical case largely due to the failure of ellipticity as discussed above. We overcome the difficulty by a geometric argument and obtain the following result. Denote by $\mathbb{S}^{k}$ a $k$-dimensional unit sphere in the unit sphere $\mathbb{S}^{n+1}$ and by $\mathbb{S}^k_+$ the (closure) of the $k$-dimensional hemisphere in $\mathbb{S}^{n+1}$.  

\begin{thm} \label{th:minoo}
Let $n \ge 2$. Let $M$ be a connected, embedded, two-sided hypersurface in $\mathbb{S}^{n+1}$  with boundary $\p M$. Suppose $\textup{int}(M)$ is $C^{n+1}$ and $M$ is $C^1$  up to boundary. Suppose $M$ and $\partial M$ satisfy the following conditions:
\begin{enumerate}
\item $M$ satisfies $R\ge n(n-1)$; 
\item $\p M$ is a great $(n-1)$-sphere $\mathbb{S}^{n-1}$;
 \item  $M$  is tangent to a great $n$-sphere $\mathbb{S}^n$ at $\p M$. 
\end{enumerate}
 Then $M$ is  the hemisphere $\mathbb{S}^n_+$. 
\end{thm} 
Theorem~\ref{th:minoo} confirms Min--Oo's problem for  hypersurfaces in $\mathbb{S}^{n+1}$ under a slightly weaker boundary condition, namely, $\partial M$ is not assumed totally geodesic in $M$. Theorem~\ref{th:minoo} is implied by the following more general result.  

\begin{thm} \label{th:rigidityS}
Let $n \ge 2$.  Let $M$ be a connected, embedded, and two-sided hypersurface in $\mathbb{S}^{n+1}$  with boundary $\p M$. Suppose $\textup{int}(M)$ is $C^{n+1}$ and $M$ is $C^1$  up to boundary. Suppose $M$ and $\p M$ satisfy the following conditions:
\begin{enumerate}
\item $M$ satisfies $R\ge n(n-1)$; 
\item  \label{it:sphere} $\partial M$ is contained in $\mathbb{S}_+^n$;
\item \label{it:rigidityS2}$M$ is tangent to $\mathbb{S}_+^n$ at $\partial M$ from the region enclosed by $\partial M$.  
\end{enumerate}
 Then $M$ is a portion of the hemisphere $\mathbb{S}^n_+$. 
\end{thm} 

We remark that $\partial M$ is a $C^0$ submanifold, but we do not need any additional regularity of $\partial M$.  The interior regularity that $\textup{int}(M)\in C^{n+1}$ is needed to apply the Sard theorem in Lemma~\ref{le:geodesicS}. It is of interest whether the regularity assumption can be weakened to $\textup{int}(M)\in C^2$.

The key ingredient is a geometric inequality for the level sets of a height function in $M$, that holds for a large class of ambient spaces. 
\begin{thm}\label{th:HHRphi}
Let $(N,g)$ be an $n$-dimensional Riemannian manifold.  Suppose $M$ is a $C^2$ hypersurface in the product manifold $(N \times \mathbb{R}, g + dt^2)$ with a unit normal vector field $\nu$. Let  $\Sigma =  M \cap t^{-1}(\epsilon)$ be a regular level set in $M$, and let $\eta$ be a unit normal to $\Sigma \subset (N \times \{\epsilon\}, g)$.  Consider the conformal metric  $\phi^{-2} (g + dt^2)$ on $N\times \mathbb{R}$ for a positive function  $\phi \in C^1(N\times \mathbb{R})$. Denote by $\bar{A}$ the shape operator of $M$ in $(N\times \mathbb{R}, \phi^{-2}(g+dt^2))$ with respect to $\phi \nu$ and by $\bar{A}_{\Sigma}$ the shape operator  $\Sigma$ in $(N \times \{\epsilon\}, \phi^{-2}(\cdot, \epsilon) g)$ with respect to $\phi \eta$. Denote by $\bar{H}$, $\bar{H}_{\Sigma}$ the corresponding mean curvature scalars. Then the following inequality holds on $\Sigma$
  \begin{equation} \label{eq:HHRphi}
    \begin{split}
	&\bar{H} \left[ \langle \nu, \eta\rangle \bar{ H}_{\Sigma} +(n-1) \langle \nu, \p_{t}\rangle \phi_{t} \right ] \\
	&\ge \frac{1}{2}\left( \bar{H}^2 - |\bar{A}|^2\right) + \frac{n}{2(n-1)} \left[ \langle \nu, \eta\rangle \bar{ H}_{\Sigma} +(n-1) \langle \nu, \p_t\rangle \phi_{t} \right ]^2,
	  \end{split}
	\end{equation}
	where $\phi_t = \p \phi/\p t$, and $\langle \cdot, \cdot \rangle$ is taken with respect to $g + dt^2$. The equality in \eqref{eq:HHRphi} holds at $p\in \Sigma$ if and only if $M$ and $\Sigma$ satisfy the following conditions at $p\in \Sigma$:
\begin{enumeratei}
  \item $\Sigma$ is umbilic at $p$ in $(N\times\{\epsilon\}, \phi^{-2}(\cdot,\epsilon)g)$. Denote by $\kappa$ the principle curvature of $\Sigma$.
  \item $M \subset (N \times \mathbb{R}, \phi^{-2}g)$ has principal curvature $\langle \nu,\eta\rangle \kappa+\langle \nu, \p_t \rangle \phi_t$ with multiplicity at least $n-1$.
\end{enumeratei}
\end{thm}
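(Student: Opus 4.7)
The plan is to work directly in the conformal ambient metric $\bar g = \phi^{-2}(g+dt^2)$ and reduce \eqref{eq:HHRphi} to a purely algebraic inequality in a well-chosen $\bar g$-orthonormal frame along $\Sigma \subset M$. The key preliminary observation is that, although each slice $N \times \{\epsilon\}$ is totally geodesic in the product metric $g+dt^2$, after the conformal change it becomes umbilic in $\bar g$ with constant principal curvature $\phi_t$. Indeed, writing $f = -\log\phi$ and using the standard formula $\bar\nabla_X Y = \nabla_X Y + (Xf)Y + (Yf)X - \langle X,Y\rangle \nabla f$, one finds for $X,Y$ tangent to the slice that
\[
  \bar g(\bar\nabla_X Y, \bar{\partial}_t) = \phi_t\,\bar g(X,Y),
\]
where $\bar{\partial}_t = \phi\,\partial_t$ is the $\bar g$-unit normal to $N\times\{\epsilon\}$. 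This single identity is what generates the $\phi_t$-correction on both sides of \eqref{eq:HHRphi}.

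At a point $p \in \Sigma$, I choose a $\bar g$-orthonormal frame $\bar e_1,\dots,\bar e_{n-1}$ of $T_p\Sigma$ diagonalizing $\bar A_\Sigma$ with eigenvalues $\bar\lambda_1,\dots,\bar\lambda_{n-1}$, and a $\bar g$-unit vector $\bar e_n \in T_pM$ orthogonal to $T_p\Sigma$. Because $\bar\nu = \phi\nu$, $\bar\eta = \phi\eta$, and $\bar{\partial}_t = \phi\,\partial_t$ all rescale by the same factor, one has $\bar\nu = b\,\bar\eta + a\,\bar{\partial}_t$ with the same coefficients $b := \langle\nu,\eta\rangle$ and $a := \langle\nu,\partial_t\rangle$ as in the product metric. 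Combining this with the umbilicity identity above gives, for $1 \le i,j \le n-1$,
\[
  \bar A(\bar e_i,\bar e_j) = b\,\bar A_\Sigma(\bar e_i,\bar e_j) + a\,\phi_t\,\bar g(\bar e_i,\bar e_j) = (b\,\bar\lambda_i + a\,\phi_t)\,\delta_{ij}.
\]
Setting $\mu_i := b\,\bar\lambda_i + a\,\phi_t$, $\bar c_i := \bar A(\bar e_i,\bar e_n)$, and $\bar d := \bar A(\bar e_n,\bar e_n)$, the matrix of $\bar A$ in this frame is the diagonal $\mathrm{diag}(\mu_1,\dots,\mu_{n-1})$ bordered by $(\bar c_1,\dots,\bar c_{n-1},\bar d)$. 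Consequently $\bar H = \sum_i\mu_i + \bar d$, $|\bar A|^2 = \sum_i\mu_i^2 + 2\sum_i\bar c_i^2 + \bar d^2$, and $\sum_{i=1}^{n-1}\mu_i = b\,\bar H_\Sigma + (n-1)a\,\phi_t$ is precisely the bracketed quantity in \eqref{eq:HHRphi}.

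Substituting these expressions into both sides of \eqref{eq:HHRphi} and expanding, all terms involving $\bar d$ and all mixed terms cancel, and the inequality collapses to the algebraic statement
\[
  \sum_{i=1}^{n-1}\mu_i^2 + 2\sum_{i=1}^{n-1}\bar c_i^2 \;\ge\; \frac{1}{n-1}\left(\sum_{i=1}^{n-1}\mu_i\right)^2,
\]
which is immediate from Cauchy--Schwarz together with $\bar c_i^2 \ge 0$. The equality analysis reads off at once: Cauchy--Schwarz equality forces $\mu_1 = \cdots = \mu_{n-1}$, equivalent to $\bar\lambda_1 = \cdots = \bar\lambda_{n-1} =: \bar\kappa$, giving umbilicity of $\Sigma$ in $(N\times\{\epsilon\},\phi^{-2}(\cdot,\epsilon)g)$; and $\bar c_i = 0$ for every $i$ makes $\bar A$ block-diagonal in the chosen frame, so $M$ has principal curvature $b\,\bar\kappa + a\,\phi_t = \langle\nu,\eta\rangle\bar\kappa + \langle\nu,\partial_t\rangle\phi_t$ of multiplicity at least $n-1$, matching (i)--(ii). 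The only real bookkeeping obstacle is keeping the conformal scaling of the unit normals straight long enough to verify that $\langle\nu,\eta\rangle$ and $\langle\nu,\partial_t\rangle$ are literally identical in $g+dt^2$ and in $\bar g$; once that is in place, the umbilicity identity for the slice and the Cauchy--Schwarz step deliver the theorem.
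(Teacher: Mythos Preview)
Your proof is correct and follows the same overall scheme as the paper: establish that the restriction of $\bar A$ to $T_p\Sigma$ equals $\langle\nu,\eta\rangle\,\bar A_\Sigma + \langle\nu,\partial_t\rangle\,\phi_t\,I_{n-1}$, then feed this into an elementary quadratic inequality (the paper's Proposition~\ref{pr:id}; your Cauchy--Schwarz step is the same identity specialized to a symmetric matrix whose $(n-1)\times(n-1)$ block is diagonal). The one genuine difference is in how the submatrix identity is obtained. The paper first proves the product-metric relation $(A|1)=\langle\nu,\eta\rangle A_\Sigma$ in explicit graph coordinates (Lemma~\ref{le:sgm1A}) and then transports it to $\bar g$ by applying the conformal shape-operator formula twice (Lemma~\ref{le:s1}). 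You instead work entirely in $\bar g$: decompose $\bar\nu=b\,\bar\eta+a\,\bar\partial_t$ and use that the slice $N\times\{\epsilon\}$, being totally geodesic in the product metric, becomes umbilic with principal curvature $\phi_t$ after the conformal change. Your route is a bit more direct and coordinate-free, bypassing the product-metric intermediate; the paper's route has the side benefit of isolating the product case (Theorem~\ref{th:HHRprod}) as a stand-alone result and of recording the identity at the level of shape operators rather than just in a diagonalizing frame. Either way, the equality analysis is identical once one notes that $\langle\nu,\eta\rangle\neq 0$ on a regular level set, so equal $\mu_i$'s really do force equal $\bar\lambda_i$'s.
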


As an independent result, we show rigidity for hypersurfaces with boundary in $\mathbb{R}^{n+1}$ with nonnegative scalar curvature. Let $P_+$ be  a half-hyperplane in $\mathbb{R}^{n+1}$, e.g.  $\{(x_1, \dots, x_n, 0) \in \mathbb{R}^{n+1}: x_1\ge 0 \}$.
\begin{thm} \label{th:rigidityE}
Let $n \ge 2$.  Let $M$ be a connected,  embedded, two-sided hypersurface in $\mathbb{R}^{n+1}$ with nonnegative scalar curvature and with nonempty boundary $\p M$ (the boundary need not be bounded). Suppose $\textup{int}(M)$ is $C^{n+1}$ and $M$ is $C^1$  up to boundary. Suppose that $\partial M \subset P_+$ and $M$ is tangent to a hyperplane at $\p M$ from the region enclosed by $\partial M$ in $P_+$. Then $M$ is the portion of either a hyperplane or a generalized cylinder.
\end{thm}

The paper is organized as follows. The geometric inequalities are derived in Section \ref{section:geometric-inequality}. The rigidity theorem for hypersurfaces in the sphere is proved in Section \ref{se:rigidityS} and the Euclidean case is proved in Section \ref{se:rigidityE}. In Section~\ref{se:Examples}, we demonstrate by two examples that the condition in Theorem~\ref{th:rigidityS} and Theorem~\ref{th:rigidityE} that $M$ is tangent at $\partial M$ from the region \emph{enclosed} by $\partial M$ is necessary.
\begin{ack}
In an earlier version of this paper, we proved a weaker version of Theorem \ref{th:minoo} using the mean curvature flow. We would like to thank Gerhard Huisken and Tom Ilmanen for helpful discussions along that direction. We also thank Pengzi Miao for his helpful comments and kind encouragement. The first author is grateful to Panagiota Daskalopoulos for discussions and to the Albert Einstein Institute for their hospitality and generous support. The second author would like to thank Jianguo Cao and Brian Smyth for the conversations. 
\end{ack}

%\begin{ack}
%The earlier proof to a weaker version of Theorem~? uses the mean curvature flow, and we would like to thank Prof.~Gerhard Huisken and Prof.~Tom Ilmanen for very helpful discussions along that direction.  The first author is grateful to Prof.~Toti Daskalopoulos and Prof.~Richard Hamilton for discussions and their encouragement, and to the Albert Einstein Institute, in which part of work was carried out, for their hospitality and generous support. She is also partially supported by the NSF  through DMS-1005560. The second author would like to thank The Ohio State University for support.
%\end{ack}

\section{Geometric inequalities} \label{section:geometric-inequality}

\subsection{Product manifolds}\label{se:prod}
Consider the product manifold $(N\times \mathbb{R}, g + dt^2)$, where $(N, g)$ is a $n$-dimensional Riemannian manifold.  Let $M$ be a $C^{2}$ hypersurface in the product manifold $N\times \mathbb{R}$ endowed with the induced metric $g^M$, and let $\Sigma = M\cap t^{-1}(\epsilon)$ be a regular level set.  Let $\nu$ and $\eta$ be vector fields in the tangent space of $N\times \mathbb{R}$ along $\Sigma$ such that $\nu$ is a unit normal to $M$, and $\eta$ is a unit normal to $\Sigma$ and $\partial t$.  Denote by $A$ the shape operator of $M$ with respect to $\nu$, and by $A_\Sigma$ the shape operator of $\Sigma\subset N\times \{ \epsilon\}$ with respect to $\eta$. Let $H$ and $H_\Sigma$ be the corresponding mean curvature scalars. The mean curvature is defined as the trace of the shape operator; equivalently, the \emph{negative} divergence of the unit normal vector field. 

\begin{lemma} \label{le:sgm1A}
Let $(e_1, e_2, \dots, e_n)$ be an orthonormal frame in the neighborhood of a point in $ \Sigma$ in  $M$ such that $(e_2, \dots, e_n)|_\Sigma$  are tangent to $\Sigma$. For $i,j = 2,\dots, n$,
\[
      A^i_j = \langle \nu, \eta \rangle (A_{\Sigma})^i_j
\]
where the inner product $\langle \cdot, \cdot \rangle$ is with respect to $g + dt^2$.
\end{lemma}
\begin{proof}
Note that $\nu = \langle \nu, \eta\rangle \eta + \langle \nu, \partial_t\rangle \partial_t$.  For $i,j=2,\dots, n$, because $\langle \eta, e_j\rangle =0$, $\langle \partial_t, e_j\rangle=0$, and $\partial_t$ is parallel along $e_i$, we have
\[
	\langle \nabla_{e_i} \nu, e_j\rangle = \langle \nabla_{e_i} \left( \langle \nu, \eta\rangle \eta + \langle \nu, \partial_t\rangle \partial_t\right), e_j\rangle = \langle \nu, \eta \rangle  \langle \nabla_{e_i}\eta, e_j \rangle.
\]
\end{proof}

\begin{prop} [\cite{Huang-Wu:2013}]\label{pr:id}
  Let $A = (a_{ij})$ be a real $n \times n$ matrix with $n \ge 2$. Denote
\[
   \sigma_1(A) = \sum_{i=1}^n a_{ii}, \quad \sigma_1(A|1) = \sum_{i=2}^n a_{ii}, \quad \sigma_2(A) = \sum_{1 \le i < j \le n} (a_{ii} a_{jj} - a_{ij}a_{ji}). 
\]
  Then, we have
  \begin{align*}
    \sigma_1(A) \sigma_1(A|1) 
    & = \sigma_2(A) + \frac{n}{2(n-1)} [\sigma_1(A|1)]^2 + \sum_{1 \le i < j \le n} a_{ij} a_{ji} \\
    & \quad + \frac{1}{2(n-1)} \sum_{2 \le i < j \le n} (a_{ii} - a_{jj})^2.
  \end{align*}
  In particular, if $A$ is a symmetric matrix, then
  \[
     \sigma_1(A) \sigma_1(A|1) \ge \sigma_2(A) + \frac{n}{2(n-1)} [\sigma_1(A|1)]^2
  \]
with the equality holds if and only if $a_{22} = \cdots = a_{nn}$ and $a_{ij} = 0$ for all $1 \le i < j \le n$.
\end{prop}

\begin{theorem} \label{th:HHRprod}
Let $M$ be a $C^2$ hypersurface in the product manifold $(N\times \mathbb{R}, g+dt^2)$, and let $\Sigma = M\cap t^{-1}(\epsilon)$ be a regular level set. 
Denote by $\nu$ a unit normal to $M$ in $N\times \mathbb{R}$, and let $H $ be the corresponding mean curvature. Denote by $\eta$ a unit normal to $\Sigma$ in $N \times \{\epsilon\}$, and let $H_\Sigma$ be the corresponding mean curvature. Then we have the following inequality on  $\Sigma$ 
\begin{equation} \label{eq:HHRprod}
  \begin{split}
		\langle \nu, \eta \rangle H H_{\Sigma} &\ge \frac{1}{2}R(g^M) - \frac{1}{2}R(g) + \langle\nu, \eta \rangle^2 \Ric_{g} (\eta, \eta) +\frac{n}{2(n-1)} \langle \nu, \eta \rangle^2 H_{\Sigma}^2,
	\end{split}
\end{equation}
where the inner product $\langle \cdot, \cdot \rangle$ is with respect to the metric $g + dt^2$. The equality holds at  $p\in \Sigma$ if and only if the following holds at $p\in \Sigma$:
\begin{enumeratei}
  \item $\Sigma$ is umbilic in $N\times\{\epsilon\}$. Denote by $\kappa$ the principal curvature of $\Sigma$.
  \item $M \subset N \times \mathbb{R}$ has principal curvature $\langle \nu,\eta\rangle \kappa$ with multiplicity at least $n-1$.
\end{enumeratei}
\end{theorem}
\begin{proof}
By Lemma~\ref{le:sgm1A}, Proposition~\ref{pr:id}, and $2\sigma_2(A) = H^2 - |A|^2$,
\[
	\langle \nu, \eta\rangle  H H_\Sigma \ge \frac{1}{2}( H^2 - |A|^2) + \frac{n}{2(n-1)}\langle \nu, \eta \rangle^2 H_{\Sigma}^2.
\]
The desired inequality then follows by applying the Gauss equation to $M$ in $N\times \mathbb{R}$ 
\[
	  R(g+dt^2) = 2 \Ric_{g+dt^2}(\nu,\nu) + R(g^M) - H^2 + |A|^2.
\]
and the curvature formulas of a product metric
\[
	R(g + dt^2 ) = R(g), \qquad  \Ric_{g+dt^2}(\nu,\nu)= \Ric_{g}(\nu', \nu'),
\]
where $\nu' = \nu - \langle \nu, \partial_t\rangle \partial_t =\langle \nu, \eta\rangle \eta $.
\end{proof}

Applying Theorem~\ref{th:HHRprod} to Euclidean space $\mathbb{R}^{n+1}$, we recover the formula in \cite[Theorem 2.2]{Huang-Wu:2013}.
\begin{coro} \label{co:eqE}
Let $M$ be a $C^2$ hypersurface in the Euclidean space $\mathbb{R}^{n+1}$. Let $\Sigma = M \cap \{ x_{n+1} = \epsilon\} $ be a regular level set. If $\nu$ and $\eta$ are unit normal vectors to $M\subset \mathbb{R}^{n+1}$ and $\Sigma \subset \{ x_{n+1} = \epsilon\}$, respectively,  let $H$ and $H_\Sigma$ be the corresponding mean curvature scalars. Denote by $R$ the induced scalar curvature of $M$. Then 
\begin{align*}
	 \langle \nu, \eta\rangle H H_{\Sigma}  \ge \frac{1}{2} R  + \frac{n}{2(n-1)}  \langle \nu, \eta\rangle^2 H_{\Sigma}^2,
\end{align*}
where the equality holds at  $p\in \Sigma$ if and only if the following conditions hold at $p\in \Sigma$:
\begin{enumeratei}
  \item $\Sigma$ is umbilic in $\{ x_{n+1} = \epsilon\} $. We denote the principal curvature of $\Sigma$ by $\kappa$.
  \item $M$ has a principal curvature $ \langle \nu,\eta \rangle \kappa$  with multiplicity at least $n-1$.
\end{enumeratei}
\end{coro}

\subsection{Conformal metrics}\label{se:conf}
We now generalize the geometric inequality to the conformal product metrics. Theorem~\ref{th:HHRphi} follows by Lemma~\ref{lemma:geometric-conformal} and Proposition~\ref{pr:id} below.

Let us first recall a general formula for the shape operator under conformal transformation. Let $g, \bar{g}$ be two Riemannian metrics on a $(n+1)$-dimensional manifold $X$ that are related by 
\[
	\bar{g} = \phi^{-2} g.
\]	
Let $M\subset X$ be a two-sided hypersurface. If $\nu$ is a unit normal vector with respect to $g$, then $\bar{\nu}=\phi \nu$ is a unit normal with respect to $\bar{g}$. If  $\bar{A}$ and $A$ are the corresponding  shape $(1,1)$ tensors, then, with respect to a frame $\{e_1, \dots, e_n\}$ of $M$, 
\begin{equation} \label{eq:Hbar}
   \bar{A}^i_j = \phi A^i_j + \nu(\phi) \delta^i_j,
\end{equation}
and the corresponding mean curvature scalars $\bar{H}$ and $H$ are related by 
\begin{align} \label{eq:mean-curvature}
	\bar{H} = \phi H + n \nu(\phi).
\end{align}
 
 We now apply the above discussion to a hypersurface $M \subset N\times \mathbb{R}$ and a regular level set $\Sigma = M\cap t^{-1}(\epsilon)$. Consider the product metric $g+dt^2$ on $N\times \mathbb{R}$. Let $\nu$ and $\eta$ be the unit normals to $M\subset N\times \mathbb{R}$ and to $\Sigma\subset N\times \{ \epsilon\}$, respectively. Let $A$ and $A_\Sigma$ be the corresponding shape operator and let $H$ and $H_\Sigma$ be the corresponding mean curvature scalars. 
 
Let $N\times \mathbb{R}$ be endowed with the conformal metric $\bar{g} = \phi^{-2}(g+dt^2)$. Denote by $\bar{A}$ and $\bar{A}_{\Sigma}$ the shape operators of $M\subset N\times \mathbb{R}$ and $\Sigma\subset N \times \{\epsilon\}$ with respect to the conformal metric, respectively. Let $\bar{H}$, $\bar{H}_{\Sigma}$ be the corresponding mean curvature scalars.
 
\begin{lemma}\label{lemma:geometric-conformal}
Let $(e_1, e_2, \dots, e_n)$ be an orthonormal frame with respect to $g+dt^2$  in a neighborhood of $M$ that contains $\Sigma$  such that $(e_2, \dots, e_n)|_\Sigma$  are tangent to $\Sigma$. The following holds on $\Sigma$, for $i,j = 2,\dots, n$, 
\[
	\bar{A}^i_j =  \langle \nu, \eta \rangle (\bar{A}_\Sigma)^i_j + \langle \nu, \p_t \rangle (\partial_t \phi) \delta^i_j, 
\]
where $\langle\cdot, \cdot \rangle$ is with respect to $g+ dt^2$.
\end{lemma}
\begin{proof}
By \eqref{eq:Hbar} and Lemma~\ref{le:sgm1A}, for $i,j=2,\dots, n$,
\begin{align*}
	\bar{A}^i_j &= \phi A^i_j + \nu(\phi) \delta^i_j = \phi \langle \nu, \eta \rangle (A_\Sigma)^i_j + \nu(\phi) \delta^i_j\\
	&=  \langle \nu, \eta \rangle  (\bar{A}_\Sigma)^i_j -  \phi \langle \nu, \eta \rangle \eta (\phi)\delta^i_j + \nu(\phi) \delta^i_j.
\end{align*}
The identity follows by $\nu =\phi \langle \nu, \eta \rangle \eta + \langle \nu, \partial_t \rangle \partial_t$.
\end{proof}

 For the spherical metric $g_S= \phi^{-2} g_0$ on $\mathbb{R}^{n+1}$,  we have the following corollary, where 
\[
	\phi = \frac{1+\sum_{i=1}^{n+1} (x^i)^2}{2}.
\]	 
\begin{coro} \label{co:eqS}
Let $M$ be a $C^2$ hypersurface in $(\mathbb{R}^{n+1}, g_S)$. Let $\Sigma = M \cap \{ x^{n+1} = \epsilon\} $ be a regular level set. Let $\nu$ and $\eta$ be unit normal vectors to $M\subset (\mathbb{R}^{n+1},g_0)$ and $\Sigma \subset (\mathbb{R}^n \times \{ x^{n+1}=\epsilon\}, g_0|_{\{ x^{n+1}=\epsilon\}})$, respectively. Let $H$ and $H_\Sigma$ be the mean curvature scalars with respect to $\phi \nu $ and $\phi \eta$, respectively. Then 
\begin{align*}
	& H  \left[ g_0 (\nu, \eta) H_{\Sigma} +(n-1) g_0(\nu, \p_{n+1}) x^{n+1} \right ] \\
	 & \ge \frac{1}{2} [R - n(n-1)] + \frac{n}{2(n-1)} \left[ g_0 (\nu, \eta) H_{\Sigma} +(n-1) g_0(\nu, \p_{n+1}) x^{n+1} \right ]^2,
\end{align*}
where the equality holds at  $p\in \Sigma$ if and only if the following conditions hold at $p\in \Sigma$:
\begin{enumeratei}
  \item $\Sigma$ is umbilic in $(\mathbb{R}^n\times\{\epsilon\},g_S)$. We denote the principal curvature of $\Sigma$ by $\kappa$.
  \item $M$ has a principal curvature $g_0 (\nu,\eta) \kappa + g_0( \nu, \p_{n+1} ) \phi_{n+1}$ in $(\mathbb{R}^{n+1},g_S)$ with multiplicity at least $n-1$.
\end{enumeratei}
\end{coro}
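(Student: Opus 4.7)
The plan is to reduce Corollary~\ref{co:eqS} to a direct application of Theorem~\ref{th:HHRphi} with a judicious choice of data, followed by the Gauss equation in constant curvature. Specifically, I would take $N = \mathbb{R}^{n}$ with $g$ the flat Euclidean metric, let $t = x^{n+1}$, and use the conformal factor
\[
   \phi(x^1, \ldots, x^n, t) = \frac{1 + \sum_{i=1}^{n}(x^i)^2 + t^2}{2},
\]
so that $\phi^{-2}(g + dt^2) = g_S$ as in the statement. With this choice, $\phi_t = \partial \phi/\partial t = x^{n+1}$, and the inner product $\langle \cdot, \cdot\rangle$ used in Theorem~\ref{th:HHRphi} (with respect to $g + dt^2$) agrees with $g_0$.

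Next I would identify $\bar{H}^2 - |\bar{A}|^2$ via the Gauss equation. Since $(\mathbb{R}^{n+1}, g_S)$ has constant sectional curvature $1$ (it is the image of the round sphere under inverse stereographic projection), the Gauss equation for the hypersurface $M \subset (\mathbb{R}^{n+1}, g_S)$ yields
\[
   R = n(n-1) + \bar{H}^2 - |\bar{A}|^2,
\]
so that $\bar{H}^2 - |\bar{A}|^2 = R - n(n-1)$. Substituting this identity, together with $\phi_t = x^{n+1}$ and the identifications $\langle \nu, \eta\rangle = g_0(\nu,\eta)$ and $\langle \nu, \partial_t\rangle = g_0(\nu, \partial_{n+1})$, into inequality \eqref{eq:HHRphi} produces exactly the inequality claimed in Corollary~\ref{co:eqS}.

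For the equality case, I would just transcribe the rigidity part of Theorem~\ref{th:HHRphi}. Condition (i) there states that $\Sigma$ is umbilic in $(\mathbb{R}^n \times \{\epsilon\}, \phi^{-2}(\cdot, \epsilon) g)$; since $\phi^{-2}(\cdot, \epsilon) g$ is the restriction of $g_S$ to the slice, this is precisely umbilicity in $(\mathbb{R}^n \times \{\epsilon\}, g_S)$. Condition (ii) gives a principal curvature of multiplicity at least $n-1$ equal to $\langle \nu, \eta\rangle \kappa + \langle \nu, \partial_t\rangle \phi_t = g_0(\nu,\eta)\kappa + g_0(\nu, \partial_{n+1})\phi_{n+1}$, matching the statement.

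There is no genuine obstacle here; the only thing to be careful about is bookkeeping between the two metrics: making sure that $\bar{H}$, $\bar{A}$, $\bar{H}_\Sigma$ all refer to the conformal metric $g_S$ (as already set up in Theorem~\ref{th:HHRphi}) while $\nu$ and $\eta$ in the statement of the corollary are the unit normals with respect to $g_0$, so that the geometric quantities $\langle \nu, \eta\rangle$ and $\langle \nu, \partial_t\rangle$ can be rewritten as $g_0(\nu, \eta)$ and $g_0(\nu, \partial_{n+1})$ without normalization issues. Once this is checked, the corollary is immediate.
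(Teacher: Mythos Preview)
Your proposal is correct and follows exactly the route intended by the paper: Corollary~\ref{co:eqS} is obtained from Theorem~\ref{th:HHRphi} by specializing to $N=\mathbb{R}^n$ with the flat metric, $t=x^{n+1}$, and the conformal factor $\phi=(1+|x|^2)/2$, then using the Gauss equation in the constant-curvature ambient space to replace $\bar{H}^2-|\bar{A}|^2$ by $R-n(n-1)$. The bookkeeping you flag (that $\nu,\eta$ are $g_0$-unit normals so that $\langle\cdot,\cdot\rangle=g_0$ and $\phi_t=x^{n+1}$) is precisely the only thing to check, and you have it right.
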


Note that the geometric inequality for the hyperbolic space also appears in \cite{Dahl-Gicquaud-Sakovich:2013} after a preprint of this article was available on the arXiv.

\section{Rigidity in spheres} \label{se:rigidityS}

Consider  the the spherical metric $g_S = \phi^{-2} g_0$ on $\mathbb{R}^{n+1}$ where
\begin{align}\label{eq:spherical-metric}
	\phi =\frac{ 1 + \sum_{i=1}^{n+1} (x^i)^2 }{2}.
\end{align}
The stereographic projection from $\mathbb S^{n+1}$ with a point removed onto $(\mathbb{R}^{n+1}, g_S)$ gives an isometry. For $u\in C^2(\mathbb{R}^n)$, let $H(u)$ be the mean curvature of the graph of $x_{n+1} = u(x_1, \dots, x_n)$  in $(\mathbb{R}^{n+1}, g_S)$ with respect to the upward unit normal vector $ \phi\nu$, where $\nu = \frac{(-Du, 1)}{\sqrt{1+ |Du|^2}}$. By \eqref{eq:mean-curvature} and direct computation, $H(u)$ is a quasi-linear elliptic operator:
\begin{align*}
	H(u) &= \frac{1+|x|^2 + u^2}{2} \sum_{i,j=1}^n \left( \delta_{ij} - \frac{u_i u_j}{1+ |Du|^2} \right) \frac{u_{ij}}{\sqrt{1+ |Du|^2}} + \frac{n}{\sqrt{1+|Du|^2}}\bigg(u - \sum_{i=1}^n x_i u_i \bigg).
\end{align*}
We apply the maximum principle for the mean curvature $H(u)$ (see \cite[Appendix A]{Huang-Wu:2010}) and compare with the graph of  $h(x) = v \cdot x$ for some constant vector $v \in \mathbb{R}^n$. The graph of $h$ is a great $n$-sphere in $(\mathbb{R}^{n+1}, g_S)$ and in particular has zero mean curvature. For $a>0$, define $X_a = \{ (x_1, \dots, x_n)\in \mathbb{R}^n: 0\le x_1 < a\}$. 

\begin{lemma} \label{lemma:pos}
Let $W$ be an open subset in $X_a$ for some $a>0$, and let $\partial W$ be the boundary of $W$ in $X_a$.  Let $u \in C^2( W) \cap C^1 (\overline{W})$ satisfy that,  for a constant vector $v \in \mathbb{R}^n$,  $u(x)=v\cdot x$ and $Du=v$ on $ \p W$.  If $H(u) \ge 0 $ in $ W$, then $u(x) > v\cdot x$ somewhere in $W$, unless  $u\equiv v\cdot x$ in $W$. 
\end{lemma}
\begin{proof}
Denote by $h(x) = v \cdot x$ on $X_a$.  Suppose on the contrary that $u \le h$ on $W$ and $u$ is not identically equal to $h$. If $u=h$ at a point in $ W$, then the  graph of $h$ is tangent to the graph of $u$ from above, which contradicts the strong maximum principle since  $H(u) \ge H(h) = 0$.  Thus $u < h$ in $W$. Let $B\subset W$ be an open ball such that $\partial W\cap \partial B \neq \emptyset$. Because $u < h$ in $B$ and $u(q)=h(q)$ at $q\in \partial W\cap \partial B$, the Hopf boundary point lemma implies that $Du(q) \neq  Dh(q) $ (see, for example, \cite[p. 359]{Huang-Wu:2010}). However, it contradicts the boundary condition.
\end{proof}

In the following rigidity proposition, we use the geometric inequality from Corollary \ref{co:eqS} to obtain mean curvature comparison between the level sets of  $u$ and the level sets of some special functions. The central idea is that the assumption on the scalar curvature and mean curvature of the graph implies certain control on the convexity of its level sets. 

\begin{prop} \label{pr:levelset}
Let $W$ be an open subset in $X_a$ for some $a>0$.  Denote by $\partial W$  the boundary of $W$ in  $X_a$. Let  $u \in C^2(W ) \cap C^1 (\overline{W})$ be a bounded function that satisfies $u(x)=v \cdot x$ and $Du=v$ on $\p W$ for some $v \in \mathbb{R}^n$. Suppose that the graph of $u$ in $(\mathbb{R}^{n+1}, g_S)$ satisfies $R\ge n(n-1)$. If either $H(u)\ge 0$ or $H(u)\le 0$ everywhere in $W$, then $u(x)= v \cdot x$ in  $W$, and in particular $H(u) \equiv 0$. 
\end{prop}

\begin{proof}
Since the rotation about the origin  is a rigid motion in $(\mathbb{R}^{n+1}, g_S)$, we may assume that $v = 0$. We also assume $H(u) \ge 0$ in $W$; otherwise, replace $u$ by $-u$. Suppose on the contrary that $u$ is not identically zero. For $b>0$ fixed and for the parameter $\lambda$, define a family of superlinear functions $\psi_{\lambda}(x) = \lambda (x_1)^{1+b}$.  Because of  the boundary condition of $u$ and that $u$ is bounded, for sufficiently large $\lambda$ we have $\psi_{\lambda} > u$ in  $W$. For a fixed $ a_0 \in (0, a)$,  continuously decrease $\lambda$ until, for the first time, $\psi_{\lambda} (p) = u(p)$ at some $p = (p_1, \dots, p_n)\in  W\cap \overline{X_{a_0}}$. By Lemma \ref{lemma:pos}, we know $u(p)>0$ and hence $\lambda > 0$. Either the graphs of $u$ and $\psi_\lambda$ are tangent over $p$ (so $D u(p) = D \psi_\lambda (p) $) or $p \in \{ (x_1, \dots, x_n)\in \mathbb{R}^n: x_1 = a_0\}$. In the latter case, since $p$ is the first contact point, 
\[
\partial_{x_1} u (p)= \lim_{t\to 0^+} \frac{u(p) - u(p-t \partial_{x_1})}{t} \ge  \lim_{t\to 0^+} \frac{\psi_\lambda(p) - \psi_{\lambda}(p-t \partial_{x_1})}{t} = \partial_{x_1} \psi_\lambda (p)= (1+b)\lambda p_1^b.
\]
In both cases, $|Du|(p)\ge(1+b)\lambda p_1^b>0$. 

Let $\epsilon = u(p)$. Consider the level set $\Sigma = \{ (x_1,\dots, x_n) \in u^{-1}(\epsilon) \cap W \mbox{ and }  x_{n+1} = \epsilon\}$. Because $|Du|(p) > 0$, $\Sigma$ is  $C^2$ near $p$.  Let $  \eta=(Du,0) / |Du|$, so $\phi \eta$ is a unit normal to $\Sigma$ in  $(\{ x^{n+1} = \epsilon \}, g_S\big|_{\{x^{n+1} = \epsilon \}})$. Let $H_{\Sigma}$ be the corresponding mean curvature. 
Since $H(u)\ge 0$ and $R\ge n(n-1)$, Corollary \ref{co:eqS} implies that 
\[
	0\le g_0(\nu, \eta) H_{\Sigma} + (n-1)g_0 (\nu, \partial_{n+1}) x^{n+1} = - \frac{|Du|}{\sqrt{1+|Du|^2}} H_{\Sigma} + \frac{(n-1)u}{\sqrt{1+|Du|^2}}.
\]
Thus, at $p$,
\[
	H_{\Sigma} \le \frac{(n-1) u}{|Du|} \le (n-1) \frac{\psi_{\lambda}(p)}{(1+b)\lambda p_1^b} = (n-1) \frac{p_1}{1+b}.
\]
Let $\widetilde{\Sigma}:=\{ (x_1,\dots, x_n)\in \psi_\lambda^{-1}(\epsilon) \mbox{ and } x_{n+1} = \epsilon\}$ be the level set of $\psi_\lambda$, which is a hyperplane in $\{ x_{n+1}= \epsilon\}$. By \eqref{eq:mean-curvature}, the mean curvature $H_{\widetilde{\Sigma}}$  of $\widetilde{\Sigma}\subset (\{x^{n+1} = \epsilon\}, g_S\big|_{\{x^{n+1} = \epsilon\}})$ with respect to $\phi \partial_{x_1}$ is
\[
   H_{\widetilde{\Sigma}}(p) = (n-1)(\partial_{x_1} \phi ) (p) =(n-1) p_1.
\]
On the other hand, by comparison principle $H_{\Sigma} \ge H_{\widetilde{\Sigma}}$ at $p$ since $\widetilde{\Sigma} $ is tangent to $\Sigma$ at $p$ toward their common normal vector $\phi \partial_{x_1}$ at $p$. A contradiction. 
\end{proof}

We shall apply the above proposition to hypersurfaces with $R\ge n(n-1)$ in the unit sphere $\mathbb S^{n+1}$  that satisfy certain boundary conditions. Denote by $\mathbb{S}^k$ the $k$-dimensional great sphere in $\mathbb S^{n+1}$ and by $\mathbb{S}^k_+$ the (closure of) hemisphere. 
 
\begin{defn} \label{de:tg}
Let $M$ be a hypersurface in $\mathbb{S}^{n+1}$. Suppose that $\partial M$ is contained in a great sphere $\mathbb{S}^n\subset \mathbb{S}^{n+1}$. 
\begin{enumerate}
\item For  an open subset  $\Gamma\subset \partial M$, we say that \emph{$M$ is tangent to the great sphere $\mathbb{S}^n$ along  $\Gamma\subset \p M$} if $M$ is locally a graph of $u$  on $\mathbb{S}^n$ near $\Gamma$ such that $u=0, |Du|=0$ on $\Gamma$.  Furthermore, we say that $p\in \Gamma$ is a \emph{strictly convex boundary point} if there is a normal neighborhood $N$ of $p$ in $M$ such that $\exp_p^{-1}(N)$ is contained in a half space in $T_pM$, and  the only point in the closure of $\exp_p^{-1}(N)$ that intersects the boundary of the half-space is $\exp_p^{-1}(p)$.
\item  Suppose that $\partial M$ is contained in the hemisphere $\mathbb{S}_+^n$, then $M$ is said \emph{to be tangent to  $\mathbb{S}_+^n$ at $\partial M$ from the the region enclosed by $\p M$} if $\partial M$ encloses an open subset $V \subset \mathbb{S}_+^n$ so that $M$ is locally  the graph of a bounded function $u$ in a collar neighborhood of $\partial V$ in $V$ with $u=0, |Du|=0$ on $\partial V$. 
\end{enumerate}
\end{defn}

\begin{remark} \label{re:boundary}
If $p$ is a convex boundary point, then $M$ locally near $p$ lies in one side of a great $n$-sphere $\mathbb{S}^{n}$. Equivalently, there is a  stereographic projection  $\Phi$ which sends this great $n$-sphere onto the hyperplane $\{ x_1=0\}$ in $ \mathbb{R}^{n+1}$  such that  $\Phi(M)$ is locally a graph of $u$ in some open subset  $W\subset X_a$ such that $u=0$ and $|Du|=0$ on $\partial W$ and $\partial W \cap \{ (x_1, \dots, x_n) \in \mathbb{R}^n: x_1 = 0\} = \Phi(p)$, where $\partial W$ is the boundary of $W$ in $X_a$. 
\end{remark}
\begin{remark} \label{re:proj}
If $M$ satisfies $(2)$ in Definition~\ref{de:tg}, then there exists a stereographic projection $\Phi$  such that  $\mathbb{S}^n_+$ is mapped via $\Phi$ onto the closed half-space $\{ (x_1,\dots, x_n)\in \mathbb{R}^n: x_1 \ge 0\}$, and there is an open subset $U \subset  \{ (x_1,\dots, x_n)\in \mathbb{R}^n: x_1 \ge  0\}$ such that  $\Phi(\p M) = \partial U$, and  $\Phi(M)$ is locally a graph of  a bounded function $u$ in a collar neighborhood of $\partial U$ in $U$  such that $u = 0$ and $|Du| = 0$ on $\partial U$.
\end{remark}

\begin{corollary}\label{corollary:local}
Let $M$ be a connected, embedded, two-sided hypersurface in $\mathbb{S}^{n+1}$ with boundary $\partial M$. Suppose $\textup{int}(M)$ is $C^{2}$ and $M$ is $C^1$ to the boundary. Suppose that $M$ is tangent to the great sphere $\mathbb{S}^n$ along an open subset $\Gamma\subset \partial M$.  If $p\in \Gamma$ is a strictly convex boundary point and, in a neighborhood of $p$ in $M$,  the scalar curvature $R\ge n(n-1)$ and the mean curvature is weakly convex, then a neighborhood of $p$ in $M$ is contained  in $\mathbb{S}^n$.
\end{corollary}

\begin{proof}
Consider the isometric image $\Phi(M)$ in $(\mathbb{R}^{n+1}, g_S)$ via the stereographic projection $\Phi$  in Remark~\ref{re:boundary}.  For some $a>0$,  $M$ is locally the graph of $u$ in an open subset $W\subset  X_a$ so that the graph of $u$ has $R\ge n(n-1)$ and is weakly mean convex in $W$ with $u=0, |Du|=0$ on $\partial W$. By Proposition~\ref{pr:levelset}, $u\equiv 0$ in $W$.
\end{proof}

For a general hypersurface with $R\ge n(n-1)$ in $\mathbb{S}^{n+1}$ \emph{without} the a priori mean curvature assumption, we need to analyze the points where the mean curvature may change signs.  By Gauss equation, $R\ge n(n-1)$ implies $H^2 \ge |A|^2$, where $A$ is the shape operator of $M\subset \mathbb{S}^{n+1}$ and $H$ is the mean curvature. Hence the set of (interior) points with zero mean curvature is identical to the set of (interior) geodesic points $M_0 = \{ p \in \inte(M): A = 0 \mbox{ at } p\}$. 

The following lemma gives a useful characterization of $M_0$. The proof is analogous to the proof for hypersurfaces in Euclidean space due to R.~Sacksteder~\cite{Sacksteder:1960}  (cf. Lemma~\ref{le:geodesicE} below). 
\begin{lemma} \label{le:geodesicS}
Let $M$ be a $C^{n+1}$ hypersurface in $\mathbb{S}^{n+1}$ (possibly with boundary). Let $M_0'$ be a non-empty connected component of $M_0$. Then $M_0'$ is contained in a great $n$-sphere $\mathbb{S}^n$ of $\mathbb{S}^{n+1}$ so that $M$ is tangent to the sphere $\mathbb{S}^n$ at every point of $M_0'$.
\end{lemma}
\begin{pf}
Consider $\mathbb{S}^{n+1}$ as the unit sphere centered at the origin in Euclidean space $\mathbb{R}^{n+2}$. Define the (generalized) Gauss map $\nu: \inte(M) \rightarrow \mathbb{S}^{n+1}$ by assigning to $p \in \inte(M)$  the unit vector normal to $M$ in $T_p \mathbb{S}^{n+1} \subset T_p\mathbb{R}^{n+2}$.  
Since $M$ is of $C^{n+1}$, the map $\nu$ is of $C^n$. By direct computation, the rank of $\nu$ is zero at any point of $M_0$.  By Sard Theorem, the image $\nu(M_0)$ has $1$-dimensional Hausdorff measure zero in $\mathbb{S}^{n+1}$. It follows that $\nu(M_0)$ is totally disconnected in $\mathbb{S}^{n+1}$. Hence, $\nu(M_0')$ consists of a single point, denoted by $\nu_0$. Because the position vector $x(p)\in \mathbb{R}^{n+2}$ is orthogonal to $\nu_0$ at $p\in M_0'$, it implies that  $M_0'$  lies in the hyperplane that passes through the origin and is orthogonal to $\nu_0$. Hence $M_0'$ is contained in the intersection of the hyperplane and the sphere, which is a great $n$-sphere in $\mathbb{S}^{n+1}$ orthogonal to $\nu_0$.
\qed
\end{pf}

The above lemma says that the components where the mean curvature changes signs are flat. It allows us to employ a replacement argument, which is the key to remove the mean curvature assumption in Proposition~\ref{pr:levelset}.  

\begin{prop} \label{proposition:open}
Let $W$ be an open subset in $X_a$ for some $a>0$. Denoted by $\partial W$ the boundary of $W$ in $X_a$.  Let  $u \in C^{n+1}(W ) \cap C^1 (\overline{W})$  satisfy $u(x)=v \cdot x$ and $Du=v$ on $\p W$ for some $v\in \mathbb{R}^n$. If the graph of $u$ in $(\mathbb{R}^{n+1}, g_S)$ satisfies $R\ge n(n-1)$, then  $u(x)\equiv v\cdot x$ in $W$. 
\end{prop}
\begin{proof}
By Proposition~\ref{pr:levelset}, it suffices to prove that the mean curvature does not change signs. In fact, we shall show that $H(u)\equiv 0$ in $W$. Suppose on the contrary that $H(u)$ is not identically zero. We may without loss of generality assume that $\{ x\in W: H(u) > 0 \mbox{ at } x\}$ is non-empty, and let $\Omega$ be a connected component. Write $\{ (x_1,\dots, x_n)\in \mathbb{R}^n: x_1 < a\} \setminus \overline{\Omega} = U_0 \cup (\cup_{k>0} U_k)$ as the disjoint union of the connected components  where $U_0$ is the  component that contains the half-space $\{ (x_1,\dots, x_n)\in \mathbb{R}^n: x_1 < 0\}$. We show that $u$ can be replaced in each $U_k$ $(k> 0)$ by the graph of a great $n$-sphere.

By Proposition~\ref{proposition:connected},  each $\partial U_k$ is connected.  Because the graph of $u$ over $\partial U_k$ is contained in $M_0\cup \textup{Graph}(u|_{\partial W})$,  by Lemma~\ref{le:geodesicS} and the boundary condition of $u$, $\textup{Graph}(u|_{\partial U_k})$ is contained in a great $n$-sphere $\mathbb S^n$ such that the graph of $u$ is tangent to the great $n$-sphere. Hence there is a unique $h_k\in C^{\infty}(U_{k})$ for each $k>0$ where $h_k$ is a graph function of some great $n$-sphere such that  $u = h_k, Du = Dh_k$ on $\partial U_k$ (see Appendix~\ref{section:sphere}). Furthermore $D^2 u = D^2 h_k $ on $\partial U_k \cap W$ since the shape operators of the graphs are both zero over $\partial U_k \cap W$. 

Define the function $\tilde{u}$ in $W_0:=\{ (x_1,\dots, x_n)\in \mathbb{R}^n: x_1 < a\} \setminus \overline{U_0}$ such that $\tilde{u} = u$ in $\overline{\Omega}$ and $\tilde{u} = h_k$ in $U_k$ for each $k>0$. Then $\tilde{u}$ is $C^2(W_0) \cap C^1(\overline{W_0})$, and the graph of $\tilde{u}$ satisfies $R\ge n(n-1)$ and is weakly mean convex everywhere, with strict mean convex $H(\tilde{u})>0$ in $\Omega$. 

We show that such $\tilde{u}$ cannot exist.  If $\partial W_0 $ intersects $\partial W$ or if $\partial W_0$ is unbounded, then $\tilde{u}(x) =\tilde{v} \cdot x$ and $D\tilde{u} =\tilde{v}$  on $\partial W_0$ on $\partial W_0$ for some $\tilde{v} \in \mathbb{R}^n$, and hence by Proposition~\ref{pr:levelset}, the graph of $\tilde{u}$ must have zero mean curvature, but it contradicts that $H(\tilde{u})>0$ in $\Omega$.  If $\partial W_0$ does not intersect $\partial W$ and is bounded, there there is $y =(y_1,\dots, y_n)\in \partial W_0$ with $y_1>0$ such that $W_0\subset \{ (x_1, \dots, x_n) \in \mathbb{R}^n: x_1 \ge y_1>0\}$. By Lemma~\ref{lemma:boundary}, $(y, \tilde{u}(y))$ is a strictly convex boundary point, and hence by Corollary~\ref{corollary:local}, the graph of $\tilde{u}$ is contained in the great $n$-sphere, which contradicts that $H(\tilde{u})>0$ in $\Omega$.
\end{proof}

\begin{proof}[Proof of Theorem~\ref{th:rigidityS}] 
Using a stereographic projection $\Phi$ in Remark~\ref{re:proj}, there is an open subset $U=\{ (x_1, \dots, x_n): x_1>0\}$ such that $\Phi(\partial M) = \partial U$ and $\Phi(M)$ is locally the graph of $u$ in a collar neighborhood of $U$ such that $u=0, |Du|=0$ on $\partial U$. Let $I = \{ (0, a) \subset \mathbb{R}^+: u\equiv 0 \mbox{ on } U\cap X_a\}$. The interval $I$ is closed by continuity of $u$. By Proposition~\ref{proposition:open}, $I$ is non-empty and open. This implies that $I= \mathbb{R}^+$ and hence $u\equiv 0$ on $U$; that is, $M$ is contained in  the great $n$-hemisphere. 
\end{proof}

\section{Rigidity in Euclidean space} \label{se:rigidityE}
The arguments in the previous section can  be applied to hypersurfaces with boundary in Euclidean space, which extends our earlier work for \emph{complete} hypersurfaces in Euclidean space \cite{Huang-Wu:2013}.  We also refine some results in \cite{Huang-Wu:2013}. In this section, $H(u)$ is the mean curvature of the graph of $u$ in Euclidean space with respect to the upward unit vector $\nu = \frac{(-\nabla u, 1)}{\sqrt{1+|\nabla u|^2}}$:
\[	
	H(u)=\sum_{i,j=1}^n \left( \delta_{ij} - \frac{u_i u_j}{ 1+ |Du|^2} \right) \frac{u_{ij} }{ \sqrt{1+|Du|^2}}.
\] 

Recall $X_a = \{ (x_1, \dots, x_n) \in \mathbb{R}^n:  0\le x_1 < a\}$. By the strong maximum principle for the mean curvature operator in the same way as in Lemma~\ref{lemma:pos}, we have the following lemma. 

\begin{lemma}[Cf. {\cite[Proposition 3.1]{Huang-Wu:2013}}] \label{lemma:posE}
Let $W$ be an open subset in $X_a$ for some $a>0$, and let $\partial W$ be the boundary of $W$ in $X_a$.  Let $u \in C^2( W) \cap C^1 (\overline{W})$ satisfy that $u(x)=v\cdot x +b$ and $Du=v$ on $ \p W\cap B(p)$ for some $v \in \mathbb{R}^n$, $b\in \mathbb{R}$.  If $H(u) \ge 0 $ in $ W$, then $u(x) > v\cdot x+b$ somewhere in $W$, unless  $u\equiv v\cdot x+b$ in $W$. 
\end{lemma}

\begin{proposition}[Cf. {\cite[Lemma 3.5]{Huang-Wu:2013}}] \label{proposition:mean-convex}
Let $W$ be an open subset in $X_a$, and let $\partial W$ be the boundary of $W$ in $X_a$.  Let  $u \in C^2(W ) \cap C^1 (\overline{W})$ be bounded and  satisfy $u(x)=v \cdot x + b$ and  $Du=v$ on $\p W$ for some $v \in \mathbb{R}^n, b\in \mathbb{R}$. Suppose the graph of $u$ in the Euclidean space $\mathbb{R}^{n+1}$ satisfies $R\ge 0$ and either $H(u)\ge 0$ or $H(u)\le 0$  in $W$. Then one of the following holds:
\begin{enumerate}
\item $u(x) =v \cdot x + b$ in  $W $; that is, the graph of $u$ is contained in a hyperplane.
\item  The graph of $u$ is a generalized cylinder. 
\end{enumerate}
In particular, if $\partial W$ is bounded, then $(1)$ must hold because a nontrivial generalized cylinder cannot  satisfy the required boundary condition. 
\end{proposition}
\begin{proof}
Because rotation and translation are  rigid motions in Euclidean space, we may assume $v=0$ and $b=0$. We also assume $H(u)\ge 0$, for otherwise replace $u$ by $-u$.  

For an arbitrary fixed $ a_0 \in (0, a)$ such that $W\cap X_{a_0}$ is not empty, if $u$ is not identically zero in $W\cap X_{a_0}$, then by Lemma~\ref{lemma:posE}, $u>0$ at some point in $W\cap X_{a_0}$. Let $\psi_\lambda(x) = \lambda x_1$ for some $\lambda \in \mathbb{R}^n$. Because of the boundary condition of $u$ and that $u$ is bounded, $\psi_\lambda > u $ for $\lambda$ sufficiently large. We continuously decrease $\lambda$ until that the graphs of $\psi_\lambda$ and $u$ touch for the first time at $p\in W\cap X_{a_0}$. By Lemma~\ref{lemma:posE}, we have $u(p) >0$ and hence $\lambda >0$. Also since the graph of $\psi_\lambda(x)$ has zero mean curvature, it cannot be tangent to the graph of $u$ at an interior point by maximum principle, and thus $p\in \{ (x_1, \dots, x_n) \in \mathbb{R}^n: x_1=a_0\}$. 

Since $p$ is the first contact point, 
\[
	|Du|(p) \ge \partial_{x_1} u (p) \ge \partial_{x_1} \psi_\lambda(p)  = \lambda>0. 
\]
Let $\Sigma= u^{-1}(u(p))$ the level set in $W$ and $\widetilde{\Sigma} = \psi_\lambda^{-1} (u(p)) $. Note that $\Sigma$ is $C^2$ near $p$ since $|Du|(p)>0$.  Let $H_\Sigma$ be the mean curvature scalar of $\Sigma$ of the unit normal vector $\eta:= (\nabla u, 0) / |\nabla u|$. By comparison principle, since $\widetilde{\Sigma}$ is tangent to $\Sigma$ at $p$ toward the common normal vector $\partial_{x_1}$ at $p$ and the mean curvature of $\widetilde{\Sigma}$ is zero,  we have $H_\Sigma \ge 0$ at $p$. 

On the other hand, by Corollary~\ref{co:eqE} (with $R\ge 0, H(u)\ge 0$, $\langle \nu, \eta \rangle <0$), we obtain $H_\Sigma\le 0$. We then conclude that $H_\Sigma=0$ and furthermore $\Sigma$ is identical to $\tilde{\Sigma}$ by strong maximum principle, which also implies that $W \supset \{ (x_1, \dots, x_n)\in \mathbb{R}^n:x_1=a_0\}$. By varying $a_0$, it implies that the graph of $u$ depends only on $x_1$ and hence is a generalized cylinder. 
\end{proof}

We follow the replacement argument as in Proposition~\ref{proposition:open} and remove the mean convexity assumption. In particular, our result includes generalized cylinders that may not be weakly mean convex. Let $M$ be a hypersurface in Euclidean space with nonnegative scalar curvature. By the Gauss equation, the mean curvature $H$ is zero only at the geodesic points. Recall the following result  
\begin{lemma}[\cite{Sacksteder:1960},  see also {\cite[Lemma 3.6]{Huang-Wu:2013}}]\label{le:geodesicE} 
Suppose  $M$ is a $C^{n+1}$ hypersurface in $\mathbb{R}^{n+1}$ (possibly with nonempty boundary). Denote by $M_0 = \{ p \in \textup{int}(M): A = 0 \mbox{ at } p\}$ the set of interior geodesic points. Let $M_0'$ be a connected component of $M_0$. Then $M_0'$ lies in a hyperplane which is tangent to $M$ at every point in $M_0'$. 
\end{lemma}

\begin{proposition}[Cf. {\cite[Proposition 3.8]{Huang-Wu:2013}}] \label{proposition:rigidity}
Let $W$ be an open subset in $X_a$, and let $\partial W$ be the boundary of $W$ in $X_a$.  Let  $u \in C^{n+1}(W ) \cap C^1 (\overline{W})$  be bounded and satisfy $u(x)=v \cdot x + b$ and  $Du=v$ on $\p W$ for some $v \in \mathbb{R}^n$ and $b\in \mathbb{R}$. Suppose the graph of $u$ in the Euclidean space $\mathbb{R}^{n+1}$ satisfies $R\ge 0$. Then either one of the following holds:
\begin{enumerate}
\item $u(x) =v \cdot x + b$ in  $W $; that is, the graph of $u$ is contained in a hyperplane.
\item  The graph of $u$ is a generalized cylinder. 
\end{enumerate}
In particular, if $\partial W$ is bounded, then $(1)$ must hold because a nontrivial generalized cylinder cannot  satisfy the boundary assumption. 
\end{proposition}
\begin{proof}
By Euclidean rigid motion, we may assume $v=0$ and $b=0$. If $H(u) \ge 0$ or $H(u)\le 0$, then Proposition~\ref{proposition:mean-convex} applies. If $H(u)$ changes signs, let $\Omega$ be a connected component of $\{ x\in W: H(u) \neq 0 \mbox{ at } x\}$.  Write $\{ (x_1, \dots, x_n)\in \mathbb{R}^n: x_1 < a\} \setminus \overline{\Omega} = U_0 \cup (\cup_{k>0} U_k)$ as the disjoint union of connected components, where $U_0$ is the component that contains $\{ (x_1, \dots, x_n) \in \mathbb{R}^n: x_1 < 0\}$. By Proposition~\ref{proposition:connected}, $\partial U_k$ is connected. Note either $\partial U_k$ intersects with $\partial W$ or $\partial U_k$ contains only points in $W$. By Lemma~\ref{le:geodesicE} and the boundary condition of $u$,  the graph of $u$ on $\partial U_k$  lies in a hyperplane, say the graph of a linear function $h_k$, so  that the graph of $u$ is tangent to the hyperplane.

We define $\tilde{u}$ in $W_0:=\{ (x_1, \dots, x_n)\in \mathbb{R}^n: x_1 < a\} \setminus \overline{U_0}$ by $\tilde{u} = u$ in $\overline{\Omega}$ and $\tilde{u} = h_k$ in $U_k$. Clearly $\tilde{u} \in C^1(\overline{W_0})$. To see that  $\tilde{u} \in C^2(W_0)$, we note that $D^2 u = 0 $ on $\partial \overline{\Omega}\cap W$ because the shape operator of the graph of $u$ is zero there. Therefore, $\tilde{u}$ satisfies the assumptions in Proposition~\ref{proposition:mean-convex}. This implies that $\Omega = \{ (x_1, \dots, x_n) \in \mathbb{R}^n: c_1< x_1 < c_2\}$ for some constants $c_1, c_2$ and  $\tilde{u} = u$ is a function of $ x_1$ in $\Omega$.  We repeating the argument for other connected components. For the set of points with zero mean curvature, we apply Lemma~\ref{le:geodesicE}. It implies that $u$ is a function that depends only on $x_1$ and hence $(2)$ holds. 
\end{proof}

\begin{thm-rigidityE}
Let $M$ be a connected, embedded, two-sided hypersurface  in $\mathbb{R}^{n+1}$ with $R\ge 0$ and with non-empty boundary $\partial M$. Suppose $\textup{int}(M)$ is $C^{n+1}$ and $M$ is $C^1$ up to boundary. Suppose that $\p M $ is contained in the hyperplane $\{ x_{n+1}=0\}$ and $ \partial M= \partial U$ for some open subset  $U \subset \{(x_1,\dots, x_n)\in \mathbb{R}^n:  x_1 > 0 \}$ such that $M$ is locally the graph of a bounded function $u$ in a collar neighborhood of $\partial U$ in $U$  with $u = 0$ and $|Du|=0$ on $\p U$. Then $M$ is a portion of either the hyperplane $\{ x_{n+1}=0\}$ or a generalized cylinder. In particular, if $\partial M$ is bounded, then $M$ must be a portion of the hyperplane. 
\end{thm-rigidityE}
\begin{proof}
Consider the interval $I=(0, a] \subset \mathbb{R}^+$ such that either $u=0$ in $U\cap X_a$ or the graph of $u$ is a generalized cylinder on $ U\cap X_a$. It is closed by continuity of $u$. By Proposition~\ref{proposition:rigidity}, $I$ is  non-empty and open and hence $I= \mathbb{R}^+$.
\end{proof}

\section{Examples} \label{se:Examples}
In this section, we shall present two examples to demonstrate that the boundary conditions in Theorem~\ref{th:rigidityS} and Theorem~\ref{th:rigidityE} are necessary. 

\begin{figure}[here] 
   \centering
   \includegraphics[width=0.7\textwidth]{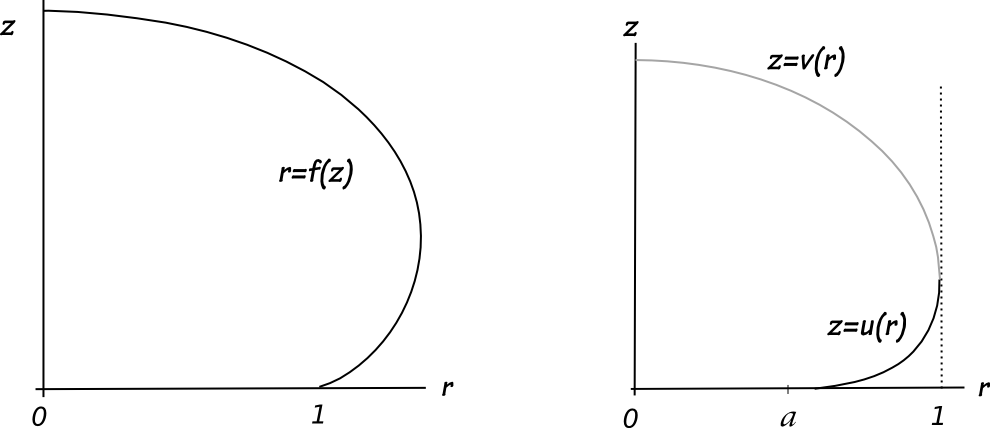}
   \caption{The left figure indicates the graph of the function $r=f(z)$ in $\mathbb{R}^3$ in Example \ref{ex:E}  where $r = \sqrt{x^2 + y^2}$. The right figure indicates the graphs of $z=u(r)$ and $z = v(r)$ in $\mathbb{S}^3$ in Example \ref{ex:S}. The surfaces are obtained by rotating the curves about the $z$-axis.}
\end{figure}

Let us first consider the boundary conditions in Theorem~\ref{th:rigidityE}.  In particular, if $\partial M$ is bounded, the assumption says that $M$ is tangent to the hyperplane at $\p M$ from the region \emph{enclosed} by $\p M$. This condition is necessary, as shown in the following example that $M$ has nonnegative scalar curvature but  is tangent to $\p M$ from the region \emph{outside} of $\p M$.
\begin{ex}\label{ex:E}
	Consider the surface in Euclidean space $(\mathbb{R}^3, g_0)$:
\[
   M = \{(x,y,z)\in \mathbb{R}^3\mid \sqrt{x^2 + y^2} - f(z) = 0 \},
\]
where
\[
   f(z) = (\sqrt{z} + 1)\sqrt{1 - z^2} \qquad \textup{for all $0 \le z \le 1$}.
\]
Clearly, the surface $M$ is obtained by rotating the curve $f(z)$ about the $z$-axis. Note that $M$ has nonnegative Gauss curvature, because $f(z)$ is concave. Furthermore, $M$ is smooth with boundary
\[
   \p M = \{(x,y,0) \in \mathbb{R}^3 \mid x^2 + y^2 = 1\}.
\]
\qed
\end{ex}

Next, we consider the assumption in Theorem~\ref{th:rigidityS} that  $M$ is tangent to a great $\mathbb{S}^n_+$ at $\p M$ from the region \emph{enclosed} by $\partial M$ (see Definition~\ref{de:tg} and Remark~\ref{re:proj}). The following example shows that the assumption cannot be removed.
\begin{ex} \label{ex:S}
  Consider $\mathbb{R}^3$ with the spherical metric $g_S$, i.e., 
  \[
     g_S = \frac{4g_0}{(1 + x^2 + y^2 + z^2)^2},
  \]
where $g_0$ is the Euclidean metric and $(x,y,z)$ are the Cartesian coordinates. We denote $r = \sqrt{x^2 + y^2}$. Fix $ a \in (0, 1)$. For $r \in (a,1)$,
\[
   u(r) = \frac{1}{\sqrt{2}} \left( - 2\sqrt{1 - r}  - \frac{r}{\sqrt{1 - a}} + 2 \sqrt{1 - a}+ \frac{a}{\sqrt{1 -a}} \right).
\]
For $r\in [0,1]$, 
\[
   v (r) = u(1) + \sqrt{1 - r^2} = \sqrt{\frac{1-a}{2}} + \sqrt{1 - r^2}.
\]
Let $M$ be the surface which is the union of graph of $u$ over $[a,1]$ and the graph of $v$ over $[0,1]$. We \emph{claim} that $M$ is a $C^2$ surface of scalar curvature $R \ge 2$ in $(\mathbb{R}^3,g_S)$ and that $R=2$ holds at and only at the boundary points. 

Note that the graph of $v$ is a portion of the unit $2$-sphere centered at $(r,z) = (0,\sqrt{(1-a)/2})$. With respect to the upward unit normal, the graph of $v$ has principal curvatures
\[
   \kappa_1 = \kappa_2 = - \frac{[u(1)]^2}{2} = -\frac{1-a}{4}.
\] 
Hence, $R = 2 + 2\kappa_1\kappa_2 > 2$ on the graph of $v$. For the function $u$, obviously we have $u \in C^{\infty}([a,1))\cap C^{0}([a,1])$. In addition, $u$ satisfies the following properties:
\begin{align}
   u(a) & = 0, \quad u'(a)  = 0, \quad u'(r) > 0 \quad \textup{for all $a < r < 1$, \; and}  \label{eq:du}\\
   u''(r) & > u' [1 + (u')^2], \qquad \textup{for all $a \le r < 1$}.    \label{eq:d2u}
\end{align}
With respect to the upward unit normal, the principle curvatures of the graph of $u$ are given by
\begin{align*}
	\lambda_1 &= \frac{1}{\sqrt{1+(u')^2}} \left[ u - ru'+\frac{1+u^2+r^2}{2}\frac{u''}{1+(u')^2}\right]\\
	\lambda_2 &=\frac{1}{\sqrt{1+(u')^2}} \left[ u - ru'+\frac{1+u^2+r^2}{2}\frac{u'}{r}\right] \\
	&=\frac{1}{\sqrt{1+(u')^2}} \left[ u + u'\frac{1+u^2-r^2}{2}\right].
\end{align*}
Since $r \le 1$, by \eqref{eq:du} we have that $\lambda_2 \ge 0$ and  $\lambda_2 = 0$ if and only if $r = a$. Applying \eqref{eq:d2u} to  $\lambda_1$ yields that
\begin{align*}
	 &u - ru'+\frac{1+u^2+r^2}{2}\frac{u''}{1+(u')^2} \\
	 &> u - ru' + \frac{1+u^2 + r^2}{2}u' 
	 = u+ u'\frac{u^2 + (r-1)^2}{2}\ge 0.
\end{align*}
It follows that $\lambda_1 > 0$. Therefore, $R = 2 + 2\lambda_1 \lambda_2 \ge 2$ on the graph of $u$, where $R = 2$ if and only if $r = a$. Obviously $M$ is smooth at the interior points of graph $u$ and graph $v$. It is elementary to verify that $M$ is of $C^2$ at the intersection curve $(r,z) = (1,\sqrt{(1-a)/2})$ of the two graphs. Thus, the claim is proved. 

Finally, we can perturb $M$ to  obtain a smooth surface. Because $R>2$ in a neighborhood of the intersection curve, we can perturb $M$ locally near the intersection curve to get a smooth surface $\widetilde{M}$ in $(\mathbb{R}^3,g_S)$ with $R\ge 2$. Also, $\widetilde{M}$ is identical to $M$ away from a neighborhood of the intersection curve. Hence, the smooth surface $\widetilde{M}$ satisfies $R\ge 2$ but the boundary is tangent from the \emph{exterior}.
\qed
\end{ex}

\appendix
\section{Topological properties} \label{se:appA}

We denote by $\tilde{H}_k(X)$ the $k$th reduced homology group of a topological space $X$ with coefficients in $\mathbb{Z}$. Note that $\tilde{H}_0(X)$ is a free abelian group, and  the rank of $\tilde{H}_0(X)$ plus $1$ is the number of path-connected components of $X$. Hence, $\tilde{H}_0(X)=0$ if and only if $X$ is path-connected. We recall the following result in \cite{Huang-Wu:2013}, which follows from the Mayer-Vietoris sequence.
\begin{lemma}[{\cite[Lemma A.1]{Huang-Wu:2013}}] \label{le:Euclidean}
Let $X$ be a contractible topological space. Let $U, V$ be two subsets of $X$ so that $X = \textup{int}(U) \cup \textup{int}(V)$ (the interiors of $U$ and $V$ may intersect). Then 
\[
	\tilde{H}_0 (U\cap V) \approx \tilde{H}_0 (U) \oplus \tilde{H}_0(V),
\]
where $\approx$ stands for the group isomorphism. In particular, if $U$ and $V$ are path-connected, then $U\cap V$ is also path-connected.
\end{lemma}

\begin{proposition}[{\cite[Proposition A.3]{Huang-Wu:2013}}] \label{proposition:connected}
Let $X$ be a contractible locally-connected topological space. Let $\Omega \subset X$ be a non-empty  connected open subset. Let $X\setminus \overline{\Omega} = \cup U_k$ be the  disjoint union of connected components.  Then $\partial U_k$ is connected. 
\end{proposition}

We include our proof here and also fix some minor typos in \cite[Proposition A.3]{Huang-Wu:2013}. 
\begin{proof}
Note that each $U_k$ is open and $\partial U_k \subset \partial \overline{\Omega}$. By replacing $\overline{\Omega}$ with $\overline{\Omega} \cup (\cup_{k \neq k_0} U_{k})$ (which is also closed and connected), we may assume that $X\setminus \overline{\Omega}= U$ is connected.  Let $\Sigma = \partial\overline{\Omega} = \partial U$.  Suppose on the contrary that $\Sigma$ is not connected. There exist non-empty disjoint open subsets $E, F$ in $X$ so that both $E$ and $F$ intersect $\Sigma$ and $\Sigma \subset E\cup F$. By discarding some components of $E$ and $F$ if necessary, we assume that every component of $E$ and $F$ intersect $\Sigma$, and note they also intersect $\overline{\Omega}$ and hence $\Omega$. Therefore, $\Omega\cup (E\cup F)$ is connected and hence path-connected since it is an open subset (and hence locally-connected). Similarly, $U\cup (E\cup F)$ is also path-connected.  However, it contradicts Lemma~\ref{le:Euclidean}, which implies that $E\cup F$ is path-connected:
\[
		\tilde{H}_0 (E\cup F) \cong \tilde{H}_0 (\Omega\cup (E\cup F)) \oplus \tilde{H}_0(U\cup (E\cup F)).
\]
 
\end{proof}

\section{Spherical geometry}\label{section:sphere}
We include some facts on the geometry of $\mathbb{R}^{n+1}$ endowed with the spherical metric $g_S= \phi^{-2} g_0$ where
\[
	\phi = \frac{(1 + \sum_{i=1}^{n+1} (x_i)^2 )}{2}.
\]
By the stereographic projection, $(\mathbb{R}^{n+1}, g_S)$  is isometric to $\mathbb{S}^{n+1}$ with a point removed.  Let $(x_1, \cdots, x_{n+1})$ be the Cartesian coordinates in $\mathbb{R}^{n+1}$ and $|x|^2 = \sum_{i=1}^{n+1} x_i^2$. A hypersurface $S$ in $(\mathbb{R}^{n+1}, g_S)$ is isometric to a great $n$-sphere in $\mathbb{S}^{n+1}$ if and only if either one of the following holds:
\begin{enumerate}
\item $S$ is a hyperplane through the origin; that is, the graph of $h(x_1,\dots, x_n)= b_1 x_1 +\cdots+ b_n x_n$ for some $(b_1,\dots, b_n) \in \mathbb{R}^n$.
\item $S$ is  a sphere centered at $(a_1, \dots, a_{n+1})\in \mathbb{R}^{n+1}$ of radius $\sqrt{1+|a|^2}$; that is, the union of the graphs $h_{\pm}$ where $|a| = a_1^2+\dots + a_{n+1}^2$ and
\[
	h_{\pm}(x_1, \dots, x_n)= a_{n+1} \pm \sqrt{1+|a|^2 - \sum_{i=1}^{n}( x_i - a_i)^2}.
\]
\end{enumerate}

\begin{lemma}\label{lemma:boundary}
Let $p= (p_1, \dots, p_{n+1}) \in \mathbb{R}^{n+1}$ with $p_1>0$. Then  there exists a  unique great $n$-sphere $S$ such that $S$ passes through $p$ and $S\setminus \{ p \} \subset \{ (x_1,\dots, x_{n+1}): x_1 <p_1 \}$. As a consequence, if $p$ is the boundary point of a hypersurface that lies in the half space $\{ (x_1,\dots, x_{n+1}): x_1 \ge p_1 \}$, then $p$ is a strictly convex boundary point. 
\end{lemma}
\begin{proof}
The sphere centered at $(a_1, p_2, \dots, p_{n+1})$ of radius $\sqrt{1+a_1^2 + p_2^2+ \dots + p_{n+1}^2}$ with 
\[
	a_1 = \frac{p_1}{2} - \frac{1}{2p_1} ( 1+ p_2^2 +\dots + p_{n+1}^2)
\]
satisfies the desired properties. 
\end{proof}

\bibliographystyle{amsplain}
\bibliography{2016}

\end{document}